\newcommand{\mS}{\ensuremath{\mathcal{S}}}
\newcommand{\bz}{\ensuremath{\mathbf{z}}}
\newcommand{\bx}{\ensuremath{\mathbf{x}}}
\newcommand{\by}{\ensuremath{\mathbf{y}}}
\newcommand{\bo}{\ensuremath{\mathbf{1}}}
\newcommand{\bzer}{\ensuremath{\mathbf{0}}}
\newcommand{\bone}{\ensuremath{\mathbf{1}}}
\newcommand{\balpha}{{\mbox{\ensuremath{\boldsymbol{\alpha}}}}}
\newcommand{\btheta}{{\mbox{\ensuremath{\boldsymbol{\theta}}}}}
\newcommand{\steps}{\mathcal{S}}
\newtheorem{theorem}{Theorem}
\newtheorem{lemma}{Lemma}
\newtheorem{definition}{Definition}
\newtheorem{conjecture}{Conjecture}
\title{Asymptotics of Weighted Reflectable Walks in $A_2$}
\author{Torin Greenwood\thanks{Department of Mathematics, North Dakota State University, Fargo, ND USA, \href{mailto:torin.greenwood@ndsu.edu}{torin.greenwood@ndsu.edu}}, Samuel Simon\thanks{Department of Mathematics, Simon Fraser University, Burnaby, BC, Canada \href{mailto:ssimon@sfu.ca}{ssimon@sfu.ca}}}
\begin{document}

\maketitle

\begin{abstract}
Lattice walks are used to model various physical phenomena.  In particular, walks within Weyl chambers connect directly to representation theory via the Littelmann path model.  We derive asymptotics for centrally weighted lattice walks within the Weyl chamber corresponding to $A_2$ by using tools from analytic combinatorics in several variables (ACSV).  We find universality classes depending on the weights of the walks, in line with prior results on the weighted Gouyou-Beauchamps model.  Along the way, we identify a type of singularity within a multivariate rational generating function that is not yet covered by the theory of ACSV.  We conjecture asymptotics for this type of singularity.  
\end{abstract}

\section{Lattice walks}

Lattice walks have a rich history both as a model of phenomena throughout math and science, and as a driving force for the development of new analytic techniques to extract asymptotics from general combinatorial problems.  For example, lattice walks have modeled melting phenomena in statistical mechanics \cite{Fisher:1984}, diffusion and Brownian motion \cite{BaFa:1987}, queueing systems \cite{FaIaMa:1999}, and Young diagrams \cite{Grabiner:2004, Konig:2005}.  Additionally, lattice walks have pushed forward the techniques of analytic combinatorics in several variables (ACSV), as the categorization of increasingly many families of lattice walks has continually stretched the limits of generating functions one can analyze \cite{BMMi:2010, CoMeMiRa:2017, MeWi:2019}.

This work continues the tradition, studying the asymptotics of reflectable weighted lattice walks within a Weyl chamber.  While this family of walks has direct connections to the Littelmann path model and representation theory \cite{Littelmann:1997}, the analysis here also reveals a type of singularity within a generating function previously unseen in applications.  Our main results include leading term asymptotics for weighted walks in the Tandem and Double Tandem models for almost all choices of \emph{central weightings}, as defined in \cref{sec:weights}.  Additionally, \cref{conj:TransverseBoundary} predicts asymptotics generally for generating functions in the new singularity regime we identified, based on merging the results on several related types of singularities.

A \emph{lattice model} in $d$ dimensions is defined by a finite stepset $\mathcal{S} \subset \mathbb{Z}^d$. A \emph{lattice walk} of length $n$, or \emph{lattice path} of length $n$, is a sequence $w = (w_1, w_2, \cdots, w_n)$ of steps $w_j \in \mathcal{S}$. 
After $m$ steps, the walk is at the point given by $\sum_{i=1}^m w_i$. We consider counting the number of walks restricted to the Weyl chamber $A_2$, defined in \cref{sec:Weyl} below.  As in \cref{fig:WeylChamber}, we will find that the walks we study could also be viewed as walks in the positive quarter plane, although the Weyl chamber interpretation allows us to use the generalized reflection principle \cite{GeZe:1992} to derive a generating function encoding the walks.  An alternate but related approach to the generalized reflection principle is the kernel method.

\subsection{Walks in restricted regions}
Dyck paths form a prototypical one-dimensional lattice path enumeration problem with a domain restriction: Dyck paths of length $2n$ start and end at $0$, take $2n$ steps from $\{1, -1\}$, and always remain at or above the point $0$.  
One way to show Dyck paths are enumerated by the Catalan numbers is to use the \emph{reflection principle}, where paths that do cross below $0$ are mapped bijectively to paths that are easier to count.

Natural extensions include counting walks in higher dimensions, with different stepsets, or in other restricted regions.  For one-dimensional walks, \cite{BaFl:2002} provides a generating function and asymptotic formula for restricted walks with general weighted stepsets, which assign a positive weight to each step.  Moving up one dimension, walks in the half plane $\mathbb{Z} \times \mathbb{N}$ can sometimes be reduced to pairs of one-dimensional weighted walks by treating the horizontal and vertical coordinates as independent walks.

When walks are otherwise restricted in multiple dimensions, the analysis is substantially more involved.  For walks in the quarter plane, 
\cite{FaIaMa:1999, BMMi:2010} provided a systematic approach for deriving a generating function for broad classes of stepsets, instead of developing ad-hoc methods for individual stepsets.  
Symmetry plays a major role in computing generating functions, which we explore in \cref{sec:Weyl}.  
Many additional works have contributed to the study of walks in the positive quadrant, including \cite{KuRa:2012, Bostan:2017, Krattenthaler:2015, Raschel:2012}.  

In \cite{MeMi:2016}, asymptotics are found for walks in the positive $d$-dimensional orthant with highly symmetric nontrivial stepsets. The authors of \cite{MeMi:2016} express the generating function as the diagonal of a multivariate rational function. They give asymptotics for such unweighted walks as a function of the stepset and number of dimensions.  By adding one degree of freedom, work in \cite{MeWi:2019} generalized these results and determined asymptotics for stepset models which are symmetric over all but one axis.

Considering other domain restrictions, \cite{DeWa:2015} gives asymptotic behavior of a multidimensional random walk in a general cone, including in Weyl chambers.  
In this work, Denisov and Wachtel provide a formula for counting the number of walks of length $n$ between two specified points in $d$-dimensional space. They show that such walks have asymptotics of the form $K \cdot \rho^n \cdot n^{-\ell-d/2}$. The value of $\ell$ is given as 
a function of the smallest eigenvalue of the Laplace-Beltrami operator, which can add a barrier to directly applying their theorem.  Furthermore, their approach can not give an explicit expression for the constant factor in the asymptotics.  The work of \cite{Duraj:2014} extends these results to additional cases, where a parameter of the weighted walks called the \emph{drift} no longer needs to be zero.  

Bostan, Raschel, and Salvy make explicit the results of Denisov and Wachtel in the case $d=2$ with the cone $\mathcal{R}=\mathbb{N}^2$. They determine asymptotic formulas for excursions for all 79 small step models in the quarter plane~\cite{BoRaSa:2014}. Bogosel et al. \cite{BoPeRaTr:2020} further extract results from Denisov and Wachtel and make explicit the cases $\mS \subset \{-1,0,1\}^3\setminus \{ \bzer \}$ with the cone $\mathcal{R}= \mathbb{N}^3$.  They study three-dimensional excursions by associating a spherical triangle to each model.

\subsection{Weighted walks} \label{sec:weights}

Many discrete models require non-uniform probabilities on the steps. Assigning weights to steps in a given model allows for a more detailed analysis of the asymptotic counting function. Through asymptotic analyses with weights, we can discover relations between aspects of the model and the asymptotic formula for the number of walks.

If each step $w_i$ in a walk $(w_1, w_2,\cdots, w_n)$ has associated weight $a_i$, define the weight of the walk as $\prod_{i=1}^n a_i$. 
If the weights are positive integers, we can interpret the weighted model as allowing colors or multisets of steps. Weights could represent probabilities when they sum to 1.  We restrict our attention to  \emph{central weights}, which are defined by the property that two walks having same length and endpoints must have the same weight.  Central weights can equivalently be defined by assigning a weight to each orthogonal axis.  We write $\balpha = (a, b)$ for two-dimensional central weights.

One goal of the work here is to provide an explicit connection between the weights of the steps in a walk and the subexponential asymptotic behavior of the walks.  This relationship is depicted in \cref{fig:TandemAsym}, illustrating the transitions between various subexponential regimes.  Because this description may be difficult to extract from the general results of \cite{DeWa:2015}, we prove the results directly.

Most similar to our results, a weighted version of the Gouyou-Beauchamps (GB) model was studied in \cite{CoMeMiRa:2017}, following the work in \cite{BMMi:2010, Bostan:2017} on the unweighted model.  Here, the stepset is $\mathcal{S} = \{(1, 0), (-1, 0), (-1, 1), (1, -1)\}$, and the coordinates of the steps are centrally weighted with $a, b > 0$.  In \cite[Theorem 1]{CoMeMiRa:2017}, the authors showed asymptotics are always of the form $\kappa V^{[n]}(i, j)\rho^n n^{-r}$ for constants $\rho$ and $r$ that depend on the weights $a$ and $b$, and a harmonic function $V^{[n]}(i, j)$ depending on the weights and parity of $n$.  In particular, the exponential growth $\rho$ is a continuous function of $a$ and $b$ across boundaries, while $r$ is not.  We observe this same behavior in \cref{thm:A2asym} below.

In \cite{CoMeMiRa:2017}, the authors also give a diagram of the subexponential regimes for the Tandem stepset without proof that matches our subexponential regimes in \cref{thm:A2asym} below, but we provide a complete description of the asymptotics with constant terms and additionally note a particularly challenging regime and a possible solution in \cref{conj:TransverseBoundary} below.

Finally, in \cite{MiSi:2020}, the second author and a collaborator found results for weighted walks in $A_1^d$ for arbitrary $d$.  Much of the work there provides a scaffold for the asymptotic analyses here, although the case of $A_2$ turns out to be more complicated for several reasons.  In particular, when using the asymptotic integral estimate described in \cref{thm:integral} below, the leading term for $A_2$ is sometimes difficult to find because many of the initial terms in the asymptotic expansion are zero.  The complexity in finding leading term asymptotics implies that it would be even more challenging to find full asymptotic expansions in these cases.

\subsection{Weyl chambers} \label{sec:Weyl}

Weyl groups allow us to generalize the notion of symmetric stepsets.  For a broad treatment of Weyl groups, see \cite{Humphreys:2012}.  Some core results on walks in Weyl chambers appear in \cite{GeZe:1992}.

\begin{figure} 
\begin{center}
    \includegraphics[width=.9\textwidth]{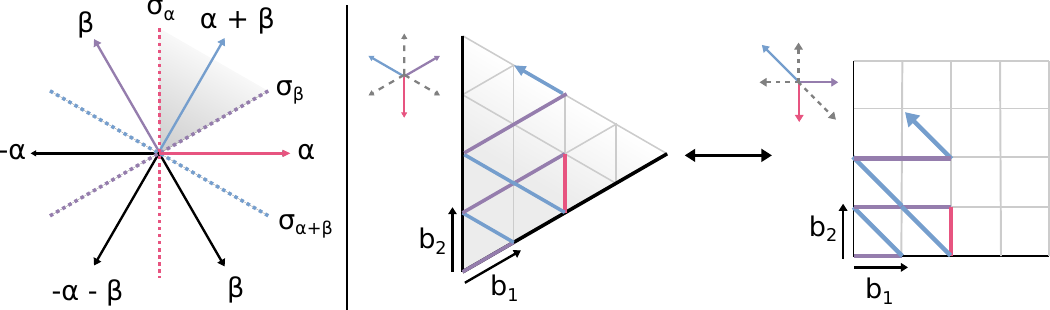}
\end{center}
\caption{The root system $\Phi = \{\pm \alpha, \pm \beta, \pm (\alpha + \beta)\} \subset \mathbb{R}^2$ appears on the left with a colored choice of positive roots.  The dotted lines illustrate the hyperplanes defining the Weyl group of reflections, $A_2$.  The fundamental Weyl chamber is shaded.  On the right, a walk in the chamber using the Tandem model stepset (colored), and the corresponding walk in the positive quadrant of $\mathbb{Z}^2$.  The Double Tandem stepset additionally includes the dashed lines.}
\label{fig:WeylChamber}
\end{figure}
\begin{definition}[Reduced Root System]
 For vectors $\bx, \by \in \mathbb{R}^d$, let $\sigma_{\bx}(\by)$ be the reflection of $\by$ through the hyperplane perpendicular to $\bx$.  A \emph{reduced root system} is a finite set of vectors $\Phi \subset \mathbb{R}^d$ such that for any $\bx, \by \in \Phi$: $\sigma_{\bx}(\by) \in \Phi$; $\by-\sigma_{\bx}(\by)$ is an integer multiple of $\bx$; and the only nontrivial scalar multiple of $\bx$ in $\Phi$ is $-\bx$.
\end{definition}

Root systems appear throughout math, especially in relation to Lie groups, and they capture important symmetry.  Given a root system $\Phi$, a special subset of \emph{positive roots} $\Phi^+$ can be chosen, where for each $\alpha \in \Phi$, exactly one of $\pm \alpha$ is in $\Phi^+$, and also if $\alpha, \beta \in \Phi^+$ and $\alpha + \beta \in \Phi$, then $\alpha + \beta \in \Phi^+$.  Then, as one more refinement, the elements of $\Phi^+$ which cannot be decomposed into sums of elements from $\Phi^+$ form a \emph{base} for $\Phi$.

The isometries defined by $\{\sigma_{\bx}: \bx \in \Phi\}$ form a group under composition, called a \emph{Weyl group}.  Additionally, the collection of hyperplanes associated to all of the isometries of the Weyl group partition $\mathbb{R}^d$ into regions called \emph{Weyl chambers}, as illustrated on the left in \cref{fig:WeylChamber}.  One of the chambers consists of points $v \in \mathbb{R}^d$ such that $\langle \gamma, v \rangle > 0$ for all $\gamma \in \Phi$, and this chamber is called the \emph{fundamental} or \emph{principal Weyl chamber}.  The root system, group of isometries, and principal Weyl chamber for $A_2$ are shown on the left in \cref{fig:WeylChamber}.  Finally, we define a reflectable stepset with respect to the Weyl group.

\begin{definition} \label{defn:reflectable}
Let $\mathcal{W}$ be a Weyl group acting on a real inner product space $V$ with a distinguished basis $\mathcal{B}=(\mathbf{b}_1,\dots, \mathbf{b}_d)$ and Weyl chamber $C$. We say that a nonempty set of vectors $\steps$ is a \emph{$(\mathcal{W},\mathcal{B})$-reflectable stepset} if for all $g\in \mathcal{W}$ and $s\in \steps$, we have $g(s)\in \steps$, and for all $s\in \steps$ and $1\leq i\leq d$, there is an integer $c_i$ such that the dot product $\langle s, \mathbf{b}_i \rangle \in \{-c_i,0,c_i\}$.
\end{definition}

For $A_2$, there are exactly two non-equivalent reflectable stepsets up to change of basis: the Tandem and Double Tandem stepsets illustrated in the middle in \cref{fig:WeylChamber}.  If the basis $\{b_1, b_2\}$ is chosen as unit vectors along the edge of the cone, then we can stretch the cone to a quadrant with axes corresponding to these basis vectors.  In this way, we can identify the walks within $A_2$ as walks in the positive quadrant of $\mathbb{Z}^2$.  Define $\mathcal{S}_T=\{(1,0)$,$(-1,1)$,$(0,-1)\}$ for the Tandem model, and $\mathcal{S}_{DT}= \{(1,0),(-1,1),(0,-1), (0,1),(1,-1),(-1,0)\}$ for the Double Tandem model.

Crucially, while $\mathcal{S}_T$ and $\mathcal{S}_{DT}$ do not appear symmetrical in the quarter plane, they are reflectable when considered within $A_2$.  Thus, the \emph{generalized reflection principle} can be used to analyze the number of walks within the chamber
\cite[Theorem 1]{GeZe:1992}.

Grabiner and Magyar gave exact results for walks in Weyl Chambers \cite{GrMa:1993}. Their formulas are for walks between two points staying within the designated chamber. They obtain these formulas using determinants. A number of their formulas include the hyperbolic Bessel function of the first kind of order $m$.

Grabiner later gave asymptotics for a number of Weyl Chambers including the region defined by $ x_1 \ge x_2 \ge \dots  \ge x_d$, which corresponds to the $d$-candidate ballot problem \cite{Grabiner:1999, Grabiner:2002, Grabiner:2004}. Here, the problem was interpreted as distributions of subtableaux in order to appeal to known formula for computing and manipulating Young tableaux. 

Krattenthaler \cite{Krattenthaler:2007} completed the asymptotic analysis for the number of random walks in a Weyl chamber and random walks on a circle, noting that computing the multiplicative constants remains a challenge.  Feierl extends this work by giving asymptotics for the zero drift reflectable walks in type $A$ Weyl chambers \cite{Feierl:2018}.  This work uses Taylor approximations and the saddle-point method to obtain asymptotics from known determinant formulas. Here, we derive results without using determinants, which leads to asymptotics of a simpler form.

\section{Results}

Here, we state our results for our asymptotic counts of weighted walks within $A_2$.  For the Tandem model, we recover the universality classes as found in \cite{CoMeMiRa:2017}, while also computing the asymptotic constants for almost all classes. We extend this to the Double Tandem model. 
In the exceptional cases when $a = 1, b < 1$ or $a < 1, b = 1$, we offer conjectured asymptotics and \cref{conj:TransverseBoundary}, a prediction for general asymptotics in such a regime.

\begin{theorem}\label{thm:A2asym}
    Let $\mathcal{R}= \mathbb{N}^2$ and let $\balpha = (a,b)$. For $\mathcal{S}_T=\{(1,0)$,$(-1,1)$,$(0,-1)\}$ (the Tandem model) and $\mathcal{S}_{DT}= \{(1,0),(-1,1),(0,-1), (0,1),(1,-1),(-1,0)\}$ (the Double Tandem model), the number of weighted walks of length $n$ which stay in $\mathcal{R}$ satisfies
    \begin{align*}
        q_{(a,b)}(n) \sim \gamma \rho^n n^{-r}
    \end{align*}
   where the exponential growth $\rho$ and subexponential growth $r$ for each of $\mathcal{S}_T$ and $\mathcal{S}_{DT}$ are given in \cref{fig:TandemAsym}, with the starred case conjectured.  The constant terms are given in \cite[Tables 5.3, 5.4]{Simon:2023}.
\end{theorem}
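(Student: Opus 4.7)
The plan is to combine the generalized reflection principle with ACSV. Since $\steps_T$ and $\steps_{DT}$ are reflectable for the order-six Weyl group $\mathcal{W}$ of $A_2$, the theorem of Gessel--Zeilberger \cite{GeZe:1992} yields a closed-form expression: the number of weighted walks from the origin to $(i,j)$ of length $n$ staying in the chamber equals an alternating sum $\sum_{\sigma\in\mathcal{W}} \sgn(\sigma)\,N_\sigma(n;i,j)$, where $N_\sigma(n;i,j)$ counts weighted unrestricted walks to a reflected endpoint. Summing over endpoints $(i,j)\in\mathbb{N}^2$ and bundling everything into a trivariate generating function in $x,y,t$, standard manipulations express $\sum_n q_{(a,b)}(n)\,t^n$ as the diagonal of an explicit rational function $F(x,y,t)/G(x,y,t)$. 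The Double Tandem case follows the same recipe with a different step polynomial but the same Weyl group action, so both models can be handled in parallel.

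With the diagonal representation in hand, the ACSV machinery extracts $[t^n]$ asymptotics. I would (i) solve the critical-point equations on the singular variety $\variety = \{G=0\}$, (ii) identify the minimal critical points lying on the boundary of the domain of convergence at the origin, (iii) classify the local geometry of $\variety$ at each such point (smooth interior point, transverse intersection of two sheets, or a point meeting a coordinate hyperplane), and (iv) apply the corresponding saddle-point integral estimate from \cref{thm:integral}. The exponential rate $\rho$ is read off from the height $-\log|t|$ at the minimal critical points; the subexponential exponent $r$ and constant $\gamma$ then emerge from the integral evaluation and depend qualitatively on the type of singularity that dominates. This structural difference is exactly what produces the piecewise description of $r$ in \cref{fig:TandemAsym}, with the regime boundaries corresponding to critical points coalescing or migrating onto coordinate axes as $(a,b)$ move, while the rate $\rho$ remains continuous. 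The proof then proceeds region by region over the $(a,b)$-plane, and the constants reported in \cite[Tables 5.3, 5.4]{Simon:2023} are the outputs of these integral evaluations.

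The main obstacle is twofold. First, as already observed in the $A_1^d$ analysis of \cite{MiSi:2020}, many of the leading coefficients in the saddle-point Taylor expansion vanish for symmetry reasons in the $A_2$ setting, so one must push the expansion to higher order to recover the true leading term; the most delicate bookkeeping in the proof is tracking which Fourier coefficients of the numerator survive cancellation and avoiding a spurious zero. Second, the boundary cases $a=1,b<1$ and $a<1,b=1$ are precisely those where the minimal critical point sits on a coordinate hyperplane in a configuration where the boundary meets $\variety$ transversely rather than tangentially, a singularity type not currently in the ACSV toolbox. For these starred cases I would only offer the prediction of \cref{conj:TransverseBoundary}, motivated by interpolating smoothly between the asymptotics rigorously established in the adjacent regimes on either side of the boundary.
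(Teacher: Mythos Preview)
Your proposal follows essentially the same pipeline as the paper: encode via Gessel--Zeilberger as a rational diagonal, locate and stratify critical points on the singular variety, filter to contributing minimal points, and evaluate region-by-region via residues and the Fourier--Laplace estimate of \cref{thm:integral}, with higher-order terms needed because the numerator vanishes at the critical points. One refinement: the obstruction in the starred cases $a=1,\,b<1$ and $a<1,\,b=1$ is not that the coordinate hyperplane meets $\variety$ transversely---transverse multiple points are already covered by ACSV---but rather that the diagonal direction $\mathbf{1}$ lies on a \emph{facet} of the normal cone $N(\mathbf{w})$ at the transverse critical point (see \cref{def:contribcone} and \cref{lem:ContribCP}) while the numerator simultaneously vanishes there, and it is this combination that current theory does not handle and that motivates \cref{conj:TransverseBoundary}.
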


\begin{figure}[ht]
    \begin{center}
    $\left.
\begin{minipage}{.6\textwidth}
    \begin{center}
        \includegraphics[width=\textwidth]{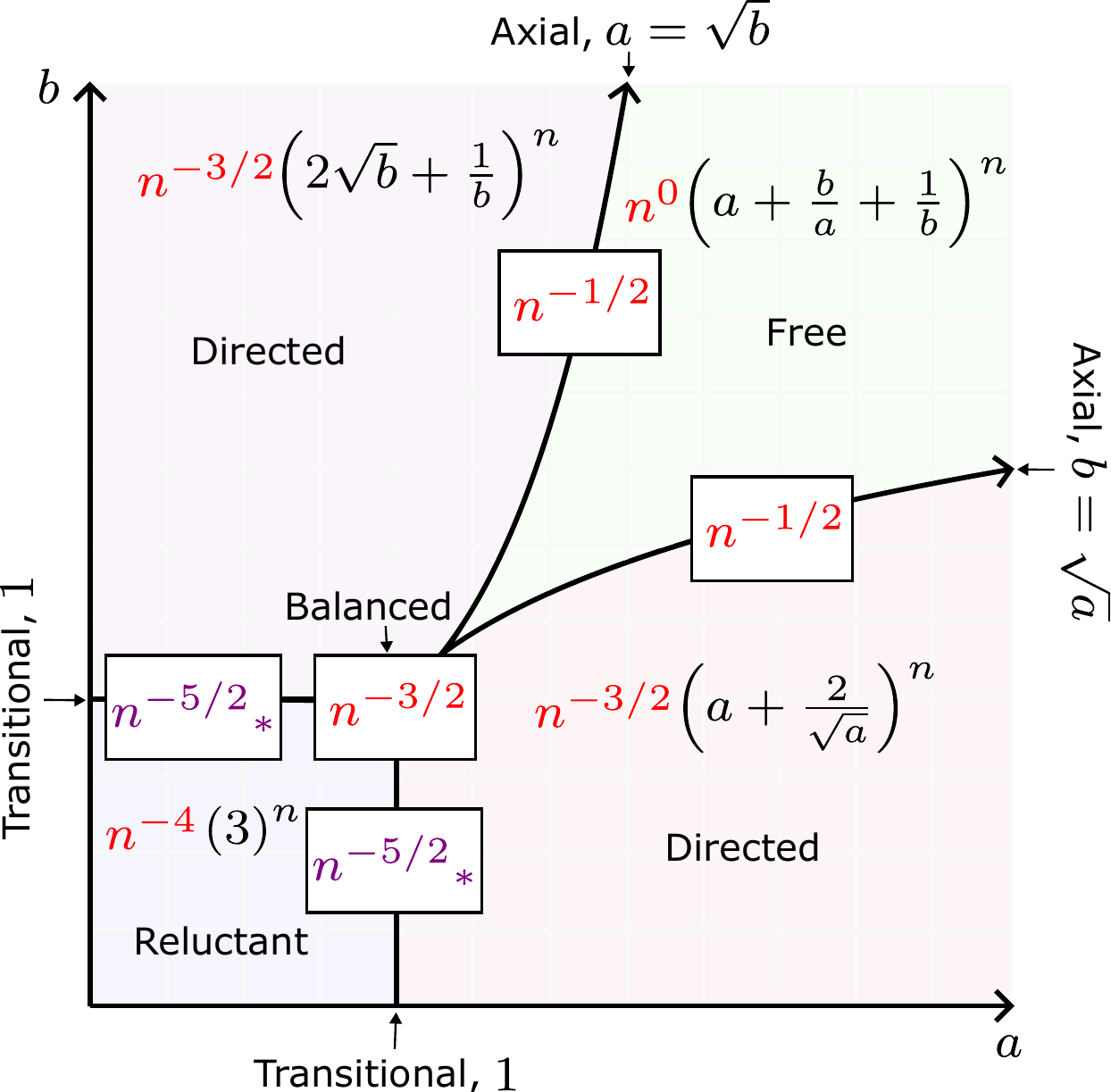}
    \end{center}
\end{minipage}
\ \right|$
\begin{minipage}{.36 \textwidth}
\begin{tabular}{c  c  c  c  c }
\midrule
Condition & DT $\rho$  \\
\midrule
$\scriptstyle 1<\sqrt{b}<a<b^2$ & $\scriptstyle \left( a+\frac{1}{a}+\frac{b}{a}+\frac{a}{b}+\frac{1}{b}+b \right)$
\\
\midrule
$\scriptstyle a=b^2 >1$  &  $\scriptstyle \left(\frac{a^2+2(a+1)\sqrt{a}+1}{a} \right)$ \\
$\scriptstyle b=a^2 >1$  &  $\scriptstyle  \left(\frac{b^2+2(b+1)\sqrt{b}+1}{b} \right)$
\\
\midrule
$\scriptstyle a>1, b<\sqrt{a}$ & $\scriptstyle \left(\frac{a^2+2(a+1)\sqrt{a}+1}{a} \right)$\\
$\scriptstyle b>1, a< \sqrt{b}$ & $\scriptstyle  \left(\frac{b^2+2(b+1)\sqrt{b}+1}{b} \right)$
\\
\midrule
$\scriptstyle b=a=1$  & 6
\\
\midrule
$\scriptstyle a<1,\ b=1$  & 6  \\
$\scriptstyle b<1,\ a=1$  & 6 
\\
\midrule 
$\scriptstyle a<1,\ b<1$  & 6
\\
\bottomrule
\end{tabular}
\end{minipage}
    \end{center}

    \caption{The Tandem and Double Tandem model have the same growth rate regimes with different exponential growth rates.  The regimes for the Tandem model are pictured on the left, with the subexponential growth (in red) and exponential growth (in black).
    The exponential growth is continuous across boundaries, and is unmarked on the boundaries.  
    On the right, the same regimes are listed with the corresponding exponential growth rates for the Double Tandem model.  Starred cases are conjectured.
    } \label{fig:TandemAsym}
\end{figure}

We verified the results given in \cref{thm:A2asym} numerically by computing $q_{(a,b)}(n)$ exactly for specific choices of $(a,b)$ in each regime and some large values of $n$ (see \cite[Table 5.4]{Simon:2023}).  In particular, we use the gfun Maple package provided by Salvy and Zimmermann~\cite{SaZi:1994}.

\section{Extracting asymptotics}
In many instances, analytic combinatorics in several variables (ACSV) provides a quick pipeline from a combinatorial description of a problem to asymptotics.  
Once a generating function is obtained, the singularities of the generating function can be classified.  Existing libraries of results (as in \cite{MePeWi:2024}) describe the asymptotics of the array for many of the most common types of singularities.

We represent a $d$-variate multivariate rational GF as $F(\mathbf{z}) := G(\mathbf{z})/H(\mathbf{z}) = \sum a_\mathbf{n} \mathbf{z}^\mathbf{n}$, where $\mathbf{z} = (z_1, \ldots, z_d)$ and $\mathbf{z}^\mathbf{n} = z_1^{n_1} \cdots z_d^{n_d}$.  The zero set $\mathcal{V} := \{\mathbf{z} : H(\mathbf{z}) = 0\}$ determines the singular variety of $F$.  We seek asymptotics for $[\mathbf{z}^{\mathbf{n}}]F(\mathbf{z})$ as $\mathbf{n} \to \infty$ in a prescribed \emph{direction} $\mathbf{\hat r} \in \mathbb{R}^d_{> 0}$, so that $\mathbf{n} \approx \mathbf{\hat r}n$ with $n \to \infty$.  In most combinatorial cases, finitely many \emph{critical points} determine the asymptotics of a generating function.  To find the critical points, consider representing the coefficients via the Cauchy integral formula,
\begin{equation} \label{eq:Cauchy}
    [\mathbf{z}^\mathbf{n}] F(\mathbf{z}) = \left(\frac{1}{2\pi i}\right)^d \int_T F(\mathbf{z}) \mathbf{z}^{-\mathbf{n} - 1} d\mathbf{z},
\end{equation}
where $T$ is a $d$-dimensional torus enclosing the origin but no singularities of $F$.  Heuristically, the critical points are determined by expanding $T$ until it reaches points on the singular variety closest to the origin that minimize the exponential growth $\mathbf{z}^{-\mathbf{n}}$ within the integrand.  

A critical point $\mathbf{p}$ is called \emph{smooth} if $\mathcal{V}$ is a smooth manifold in a neighborhood of $\mathbf{p}$.  This means that if $\mathcal{V}$ is $d$-dimensional, then in a neighborhood of $\mathbf{p}$ there is a smooth parameterization of $\mathcal{V}$ using only $d - 1$ variables. For rational generating functions, smoothness is easily checked using the implicit function theorem (see \cite[Lemma 7.6]{MePeWi:2024}).

For many lattice path enumeration problems, there are also \emph{transverse multiple points}, where $\mathcal{V}$ can locally be smoothly deformed into the intersection of perpendicular hyperplanes.  For rational GFs, these types of critical points satisfy systems of polynomial equations in terms of the denominator $H$ and its partial derivatives (see \cref{subsec:A2crit}).

Call a critical point $\mathbf{p} = (p_1, \ldots, p_d)$ \emph{minimal} when there are no other points $\mathbf{q} \in \mathcal{V}$ where $|q_i| \leq |p_i|$ for each coordinate with at least one inequality strict.  Smooth minimal critical points always contribute to asymptotics.  
However, for transverse minimal critical points, an additional technical condition must be met (\cref{def:contribcone}). 
A highlight of the analysis of weighted walks in $A_2$ is that there is a case where the technical condition is \emph{almost} met.  \cref{conj:TransverseBoundary} predicts this halves the contribution of the critical point to the asymptotics.

\section{Proof sketch}
We obtain the asymptotics in \cref{thm:A2asym} with the following steps:
\begin{enumerate}\setlength\itemsep{-.25em}
\item {\bf Encoding as a diagonal.} Using the symmetry group corresponding to the stepset, represent the generating function as a diagonal of a rational function.
\item {\bf Computing critical points.} Find the solutions to the critical point equations.
\item {\bf Finding contributing critical points.} Determine which critical points are contributing as a function of the weights.
\item {\bf Evaluating the Cauchy integral.} Simplify the Cauchy integral (\cref{eq:Cauchy}) to a Fourier-Laplace integral and then use existing results.
\end{enumerate}

\subsection{Encoding as a diagonal}
For both $\mS_T$ and $\mS_{DT}$, the reflection group is generated by the involutions $\Psi(x,y) = (y/x, y)$ and $\Phi(x,y) = (x, x/y)$.  
Using either the generalized reflection principle (as in \cite{MeMi:2016}) or evaluations of the unweighted generating functions in \cite[Examples 6.5.1 and 6.5.2]{MeWi:2019}, we find that weighted walks starting at the origin of length $n$ are encoded as coefficients of $x^ny^nt^n$ in the following functions.
 \begin{align}\label{eq:GF-T}
 F_T(x,y, t)&=\frac{G_T(x,y)}{H_T(x,y,t)} = \frac{(b^2x - ay^2)(bx^2 - a^2y)(xy-ab)}{(1-txy(\frac{a}{x} + \frac{bx}{ay}+\frac{y}{b}))a^3b^3(1-x)x(1-y)y}, \\
 F_{DT}(x,y,t)&=\frac{G_{DT}(x,y)}{H_{DT}(x,y,t)}\nonumber\\
 &= \frac{(b^2x - ay^2)(bx^2 - a^2y)(xy-ab)}{(1-txy(\frac{a}{x} + \frac{x}{a}+\frac{bx}{ay}+\frac{ay}{bx}+\frac{y}{b}+\frac{b}{y}))a^3b^3(1-x)x(1-y)y}. \label{eq:GF-DT}
 \end{align}

\subsection{Computing critical points}\label{subsec:A2crit}
First, we compute all possible critical points for all values of the weights $(a, b)$.  Then, in \cref{sec:ContribCrit}, we determine which critical points contribute to asymptotics.  We focus on the Tandem case here, as the Double Tandem case follows a similar analysis.

Weighted walks are encoded as the main diagonal of the functions in \cref{eq:GF-T}, so we search for critical points in the $\mathbf{1} = (1, 1, 1)$ direction.  By definition, smooth critical points satisfy $\{H = 0, xH_x = yH_y = tH_t\},$ where $H = H_T$. Next, to rule out non-smooth, non-transverse points, we verify that the factorization of $H_T$ given in \cref{eq:GF-T} is a \emph{transverse polynomial factorization} (as in \cite[Definition 9.3]{Melczer:2021}): define the \emph{inventory} $S(x, y) = ax + by/ax + 1/by$, and label the factors $H_0 = (1 - txyS(1/x, 1/y))$$, H_1 = (1-x), H_2 = (1-y)$.  At any point $\mathbf{w}$ where a factor $H_i(\mathbf{w}) = 0$, its gradient $\nabla H_i(\mathbf{w})$ is nonzero, and also at any point where the factors are simultaneously zero, their gradients are linearly independent.  (In fact, this applies broadly to GFs encoding other types of walks.)  This implies there are no non-smooth, non-transverse points.

To find the transverse multiple points, we must consider all 7 combinations of whether $H_0, H_1,$ and $H_2$ are zero, and
use \cite[Definition 9.7]{Melczer:2021} to compute the transverse critical points for each such \emph{stratum} individually.  Conveniently, the technical definition of transverse critical points simplifies greatly in these lattice walk cases where all but one of the factors are of the form $1 - x$ and $1 - y$.  For example, to compute the 
transverse critical points for $\mathcal{V}_{0, 1}$ (where $H_0, H_1 = 0$ and $H_2 \neq 0$), the equations simplify to using the smooth critical point equations on $H_0(1, y, t)$ to compute the $y$ and $t$ critical point coordinates.
Ultimately, we obtain the critical points in \cref{table:TandemCritPt} for each stratum.

\begin{table}[ht]
\begin{center}
\begin{tabular}{ c  c  c  c }
Stratum & $x$ & $y$ & $t$ \\
\hline
$\mathcal{V}_{0}$ & $a$ & $b$ & $ \frac{1}{3ab}$  \\
 &  $ae^{i (2\pi/3)}$ & $be^{i (4\pi/3)}$ & $ \frac{e^{i (4\pi/3)}}{3ab}$ \\
 &  $ae^{i (4\pi/3)}$ & $be^{i (2\pi/3)}$ & $ \frac{e^{i (2\pi/3)}}{3ab}$ \\
\hline 
$\mathcal{V}_{0,1}$ & $1$ & $b/\sqrt{a}$ & $ \frac{a^{5/2} - 2a}{b(a^3 - 4)}$ \\
& $1$ & $-b/\sqrt{a}$ & $-\frac{a^{5/2} + 2a}{b(a^3 - 4)}$ \\
\hline 
$\mathcal{V}_{0,2}$ & $a/\sqrt{b}$ & $1$ & $ \frac{2b^3 - b^{3/2}}{(4b^3 - 1)a}$ \\
& $-a/\sqrt{b}$ & $1$ & $\frac{2b^3+b^{3/2}}{(4b^3 - 1)a}$ \\
\hline 
$\mathcal{V}_{0,1,2}$ & $1$ & $1$ & $ \frac{1}{a + \frac{b}{a} + \frac{1}{b}}$
\end{tabular}
\caption {Critical points for each stratum, corresponding to every possible non-trivial choice of setting some of the factors $\{H_0, H_1, H_2\}$ to zero.}\label{table:TandemCritPt}
\end{center}
\end{table}

\subsection{Finding contributing critical points} \label{sec:ContribCrit}

We now refine to contributing critical points, starting by checking minimality.  The form of the generating function here is close enough to the Gouyou-Beauchamps generating function that we can reuse a result from \cite{CoMeMiRa:2017}.
\begin{lemma}[Lemma 3 of \cite{CoMeMiRa:2017}]\label{lemma:minimalclass}
    For the rational function $F(x,y,t)$ described by \eqref{eq:GF-T}, when $G$ and $H$ are coprime the point $(x,y,t)\in \mathcal{V}$ is minimal if and only if
\begin{align*}
\vert x \vert \le 1, \quad \quad \vert y \vert \le 1, \quad \quad \vert t \vert \le 
\frac{1}{\vert x y \vert S(\vert \frac{1}{x} \vert, \vert \frac{1}{y} \vert ) },
\end{align*}
where the three strict inequalities do not occur simultaneously. 
\end{lemma}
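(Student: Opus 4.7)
The plan is to adapt the template of \cite[Lemma 3]{CoMeMiRa:2017} to the Tandem denominator by exploiting its parallel factorization structure. Setting $P(x,y) := xy\, S(1/x, 1/y) = ay + \tfrac{b}{a}x^2 + \tfrac{1}{b}xy^2$, we write $H_T = (1 - tP(x,y))(1-x)(1-y) \cdot a^3 b^3 \cdot xy$, where the last two factors are nonvanishing at the candidate minimal points. Because $P$ has strictly positive coefficients, the triangle inequality gives $|P(x,y)| \leq P(|x|, |y|)$, and the function $(|x|, |y|) \mapsto P(|x|, |y|)$ is strictly coordinate-wise increasing; these two facts are the workhorses of both directions.

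For the forward direction, I would assume $(x_0, y_0, t_0) \in \mathcal{V}$ is minimal and establish each inequality by producing an explicit smaller witness if the inequality were violated. If $|x_0| > 1$, the point $(1, y_0, t_0)$ lies on $\mathcal{V}$ via the factor $1-x$ and is strictly smaller in modulus in the first coordinate; symmetrically $|y_0| \leq 1$. If $|t_0| > 1/P(|x_0|, |y_0|)$, the positive-real point $(|x_0|, |y_0|, 1/P(|x_0|, |y_0|))$ lies on $\mathcal{V}_0$ and is strictly smaller in the $t$-coordinate. Finally, if all three inequalities were strict, then the triangle bound for $1 - tP$ combined with $x_0 \neq 1$ and $y_0 \neq 1$ would force each of the factors $1-x$, $1-y$, $1-tP$ to be nonzero at $(x_0, y_0, t_0)$, contradicting membership in $\mathcal{V}$.

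For the reverse direction, I would suppose the three inequalities hold with at least one being an equality, and assume for contradiction a witness $(x', y', t') \in \mathcal{V}$ with $|x'| \leq |x_0|$, $|y'| \leq |y_0|$, $|t'| \leq |t_0|$ and at least one strict inequality. By the coprimality of $G$ and $H$, at least one of the factors $1-x$, $1-y$, $1-tP$ vanishes at the witness, and a case analysis on which one vanishes, combined with strict monotonicity of $P$, forces equality in each of the three modulus inequalities, contradicting the assumed strict inequality. I expect the main obstacle to be the $1-tP$ case: there one closes the chain $|t'| \geq 1/P(|x'|, |y'|) \geq 1/P(|x_0|, |y_0|) \geq |t_0| \geq |t'|$ using the hypothesis on $|t_0|$ and monotonicity to simultaneously pin down $|x'| = |x_0|$, $|y'| = |y_0|$, $|t'| = |t_0|$, and one must then handle the subcase where two or three of the factors vanish jointly (corresponding to witnesses on edges or corners of the allowed region) by verifying that the equality conditions in the monotonicity argument are tight.
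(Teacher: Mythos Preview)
The paper does not give its own proof of this lemma: it is quoted verbatim as Lemma~3 of \cite{CoMeMiRa:2017}, with the remark that the Tandem denominator has the same factorization structure as the Gouyou--Beauchamps denominator, so the result carries over. Your proposal reconstructs exactly that argument, and the forward direction and the $1-tP$ case of the reverse direction are handled correctly.

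One point to tighten in the reverse direction: you take the witness $(x',y',t')$ to satisfy $|x'|\le|x_0|$, $|y'|\le|y_0|$, $|t'|\le|t_0|$ with at least one strict inequality, matching the paper's stated definition of minimality. But with that definition the cases $x'=1$ and $y'=1$ do not close: if $|x_0|=1$ then $(1,y_0/2,t_0/2)\in\mathcal{V}$ is a genuine smaller witness, so the lemma would actually be false. The resolution is that the lemma (and \cite{CoMeMiRa:2017}) use the standard ACSV notion of minimality, namely that no singular point lies in the \emph{open} polydisc $\{|x|<|x_0|,\,|y|<|y_0|,\,|t|<|t_0|\}$. With the witness inequalities all strict, the cases $x'=1$ and $y'=1$ are immediately excluded by $|x'|<|x_0|\le 1$, and your chain of inequalities in the $1-tP$ case finishes the proof. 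You should also note explicitly that the factors $x$ and $y$ in $H_T$ are excluded from $\mathcal{V}$ (one works in $(\mathbb{C}^*)^3$), since otherwise no point with nonzero coordinates is minimal.
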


Next, we filter to minimal points minimizing the height function $|xyt|^{-1}$.

\begin{lemma}\label{prop:minimala2}
    For each value of $a,b$, the unique positive minimal point that minimizes the height function $|xyt|^{-1}$ is given in \cref{table:TandemCrit}.
\end{lemma}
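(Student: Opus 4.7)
The plan is to evaluate each of the four positive real critical points from Table~\ref{table:TandemCritPt} (one from each of the strata $\mathcal{V}_0$, $\mathcal{V}_{0,1}$, $\mathcal{V}_{0,2}$, $\mathcal{V}_{0,1,2}$), determine the set of weights $(a,b)$ for which each is minimal via Lemma~\ref{lemma:minimalclass}, compute the heights $|xyt|^{-1}$ in closed form, and then compare heights in each region of $(a,b)$-space shown in Figure~\ref{fig:TandemAsym}.

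First I would simplify the heights. Direct substitution gives height $3$ at $P_0=(a,b,\tfrac{1}{3ab})$ and height $a+b/a+1/b$ at $P_{012}=(1,1,\tfrac{1}{a+b/a+1/b})$. For $P_{01}=(1,b/\sqrt{a},\cdot)$, factoring $a^{5/2}-2a=a(a^{3/2}-2)$ and $a^3-4=(a^{3/2}-2)(a^{3/2}+2)$ collapses the expression to height $a+2/\sqrt{a}$, and the height $b+2/\sqrt{b}$ at $P_{02}$ follows by the $(a,b)\leftrightarrow(b,a)$ symmetry. Applying Lemma~\ref{lemma:minimalclass} then shows: $P_{012}$ is minimal for every $(a,b)$ (all three bounds saturated); $P_{01}$ is minimal precisely when $b\leq\sqrt{a}$; $P_{02}$ precisely when $a\leq\sqrt{b}$; and $P_0$ precisely when $a,b\leq 1$. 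A key observation is that several of these formulas collapse to the same point on the boundary segments: $P_{01}$ and $P_{012}$ coincide at $(1,1,\cdot)$ on $a=b^2$, and $P_0$ coincides with $P_{01}$ along $a=1$, $b<1$ (and with $P_{02}$ along $a<1$, $b=1$), so uniqueness will still hold even where several ``formal'' critical points have equal height.

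The height comparisons themselves reduce to three AM-GM inequalities: $a+b/a+1/b\geq a+2/\sqrt{a}$ with equality iff $b=\sqrt{a}$; its symmetric version in $b$; and $a+2/\sqrt{a}\geq 3$ with equality iff $a=1$. Matching these against the minimality information picks out the unique positive minimal critical point minimizing the height in each region of Figure~\ref{fig:TandemAsym}: $P_{012}$ in the interior regime $\sqrt{b}<a<b^2$ with $a,b>1$; $P_{01}$ on the wedge $a>1$, $b\leq\sqrt{a}$ (collapsing to $P_{012}$ on $a=b^2$); $P_{02}$ on the symmetric wedge; and $P_0$ throughout $\{a,b\leq 1\}$ (where it coincides with $P_{01}$ on $a=1,b<1$ and with $P_{02}$ on $a<1,b=1$).

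The main obstacle is not any single computation but the careful bookkeeping along the boundaries of Figure~\ref{fig:TandemAsym}, where either two critical-point formulas describe the same point or a critical point sits on the verge of losing minimality. Tracking these coincidences and transitions is what underwrites the uniqueness claim in the lemma, and it is also what singles out the regimes $a=1,b<1$ (and its mirror) as special: there, $P_0$ and $P_{01}$ merge on the boundary of the minimality region of $P_{01}$, foreshadowing the non-standard singularity addressed by Conjecture~\ref{conj:TransverseBoundary}.
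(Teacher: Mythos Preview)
Your overall strategy is sound and close in spirit to the paper's sketch, but one step is wrong: the Tandem model does \emph{not} have an $(a,b)\leftrightarrow(b,a)$ symmetry. The weighted inventory is $S(1/x,1/y)=a/x+bx/(ay)+y/b$, which is not invariant under swapping $(a,x)\leftrightarrow(b,y)$. Consequently the height at $P_{02}=(a/\sqrt{b},1,\cdot)$ is $2\sqrt{b}+1/b$ (as recorded in Table~\ref{table:TandemCrit}), not $b+2/\sqrt{b}$. With the correct value your AM--GM comparisons still go through: $a+b/a\ge 2\sqrt{b}$ with equality iff $a=\sqrt{b}$, and $\sqrt{b}+\sqrt{b}+1/b\ge 3$ with equality iff $b=1$. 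So the fix is local and the rest of your argument survives intact.

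Compared to the paper, your route is a direct case-by-case height comparison via AM--GM, whereas the paper's sketch packages the same inequalities into a structural observation: the minimal critical points form a boolean lattice indexed by which coordinates equal $1$, and height is monotone along this lattice (fewer coordinates equal to one $\Rightarrow$ smaller height), so one never needs to compare two points with the same number of non-one coordinates (here, $P_{01}$ versus $P_{02}$). In two variables the two arguments are essentially the same computation; the paper highlights the lattice formulation because it scales cleanly to $A_d$.
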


\begin{table}[h!]
\begin{center}
\def\arraystretch{1.6}
\begin{tabular}{|c | c | c | c|}
\hline
CP & Conditions on weights & Positive minimal critical point & Exponential growth  \\
\hline
 1&$1<\sqrt{b}<a<b^2$ & $ x=1, y=1, t=\frac{1}{b+a/b+1/a} $ & $a + \frac{b}{a} + \frac{1}{b}$\\
\hline
 2&$a>1, b\leq\sqrt{a}$  & $ x=1, y= \frac{b}{\sqrt{a}}, t= \frac{a^{5/2} - 2a}{b(a^3 - 4)}  $ & $a + \frac{2}{\sqrt{a}}$\\
 \hline
 3&$b>1, a\leq \sqrt{b}$ & $ x= \frac{a}{\sqrt{b}}, y=1,  t= \frac{2b^3 - b^{3/2}}{4ab^3 - a}$ & $2\sqrt{b} + \frac{1}{b}$\\
\hline
 4&$a\leq1, b\leq1$  & $x=a,y=b, t=\frac{1}{3ab}$ & $3$\\
 \hline
\end{tabular}
\\\ \\
\caption {Positive minimal critical points for choices of the weights $a$ and $b$.}\label{table:TandemCrit}
\end{center}
\end{table}

\begin{proof}[Proof (sketch)]
Minimizing the height $|xyt|^{-1}$ is equivalent to minimizing $ \vert S(1/x, 1/y) \vert$,  which we can accomplish using calculus.  Both here and in arbitrary dimension, the contributing critical points display a non-obvious boolean lattice structure in the following sense.  Any given critical point is minimal when each of its coordinates (except the $t$ coordinate) is at most $1$.  If there is a minimal critical point with coordinate $x_i \neq 1$ and another minimal critical point with coordinate $x_j \neq 1$, then there must also be a minimal critical point where $x_i \neq 1$ and $x_j \neq 1$.  This greatly simplifies the problem of finding contributing critical points because it is easy to show that the more coordinates are equal to one in a critical point, the less the corresponding exponential growth is.  Then, from the boolean structure, there is never a need to compare the contributions of two different critical points with the same number of non-one coordinates.
\end{proof}

When there are only finitely many smooth minimal critical points, we can use existing results to compute asymptotics, but we need an additional definition and criterion in the presence of transverse multiple points.

\begin{definition}[Definition 9.8 of \cite{Melczer:2021}]\label{def:contribcone}
Let $H(\bz) =H_1(\bz) \cdots H_m(\bz)$ be a square-free factorization of $H$. Fix $K = \{k_1, \cdots, k_q\} \subseteq \{1, \ldots, m\}$, and let ${\bf w} \in \mathbb{C}^d$ be a solution to the critical point equations for the stratum where $H_i = 0$ if and only if $i \in K$.
For each $1 \le j \le q$ let $b_j \in \{1, \cdots, d\}$ be an index such that the partial derivative $(\partial H_{k_j} / \partial z_{b_j})({\bf w}) \ne 0$. The vector 
\begin{align*}
    {\bf v}_{k_j} = \frac{ (\nabla_{\log} H_{k_j})({\bf w})}{w_{b_j}(\partial H_j / \partial z_{b_j})({\bf w})} = 
    \left( 
    \frac{ w_{1}(\partial H_{k_j} / \partial z_{1})({\bf w})}{w_{b_j}(\partial H_j / \partial z_{b_j})({\bf w})}, \cdots, 
    \frac{ w_{d}(\partial H_{k_j} / \partial z_{d})({\bf w})}{w_{b_j}(\partial H_j / \partial z_{b_j})({\bf w})} 
    \right)
\end{align*}
has real coordinates. The \emph{normal cone} of $H$ at $\bf w$ is the set
\begin{equation} \label{eq:NormalCone}
    N({\bf w}) = \left\{ \sum_{j=1}^q a_j {\bf v}_{k_j} : a_j>0 \right\} \subset \mathbb{R}^d.
\end{equation}
The point $\bf w$ is called a \emph{contributing point} if  $\bf w$ is minimal, $\bf w$ minimizes $\vert {\bf z} \vert^{-1}$ among all minimal points, and ${\bo} \in N({\bf w})$.
\end{definition}

In some regimes below, it turns out that $\bone$ is in the boundary of $N({\bf w})$ (i.e. some $a_j$ must be 0).  Although $\mathbf{w}$ then does not meet the requirements to be a contributing point, it may still determine asymptotics.  The following lemma applies to both the critical points in \cref{table:TandemCrit}, and also to the critical points more generally for reflectable walks in $A_2^d$.  

\begin{lemma} \label{lem:ContribCP}
Let $\bf{w}$ be a  minimal critical point. If $\bf{w}$ has a coordinate of $1$ and $\mathbf{w}$ satisfies the smooth critical point equations for $H_0 = 1 - txyS(\frac{a}{x}, \frac{b}{y})$ in the direction $\bone$, then $\bone$ is on the boundary of the normal cone $N({\bf w})$ (see \cref{def:contribcone}).  Otherwise, $\bone$ is on interior of $N(\mathbf{w})$.
\end{lemma}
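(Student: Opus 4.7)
The plan is to argue by a case split on the stratum of $\mathbf{w}$, compute the generators of $N(\mathbf{w})$ explicitly in each case, and then solve the triangular linear system $\mathbf{1}=\sum_j a_j\mathbf{v}_{k_j}$ to read off the signs of the $a_j$.

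First I would write down the generators. Since $H_1=1-x$ and $H_2=1-y$ each depend on a single variable, \cref{def:contribcone} gives $\mathbf{v}_1=(1,0,0)$ and $\mathbf{v}_2=(0,1,0)$ immediately. For $H_0=1-tQ(x,y)$ with $Q(x,y)=ay+bx^2/a+xy^2/b$, the identity $tQ=1$ at any zero of $H_0$ forces $tH_{0,t}|_{\mathbf{w}}=-1$, and normalizing by the $t$-entry yields $\mathbf{v}_0=(\alpha,\beta,1)$ where $\alpha:=xQ_x/Q$ and $\beta:=yQ_y/Q$ at $\mathbf{w}$. The stratum CP equations then pin $\alpha,\beta$: in $\mathcal{V}_0$ the full smooth CP equations give $\alpha=\beta=1$; in $\mathcal{V}_{0,1}$, freezing $x=1$ leaves only $yH_{0,y}=tH_{0,t}$ and forces $\beta=1$ (while $\alpha$ is free); $\mathcal{V}_{0,2}$ is symmetric; and in $\mathcal{V}_{0,1,2}$ no constraint beyond $H_0=0$ is imposed.

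With these generators the system is triangular and I can read off coefficients directly. In $\mathcal{V}_0$, $a_0=1>0$ and $\mathbf{w}$ has no coordinate equal to $1$, placing us in the ``Otherwise'' clause and giving interior. In $\mathcal{V}_{0,1}$ the system yields $(a_0,a_1)=(1,1-\alpha)$, so $\mathbf{1}$ lies on the boundary iff $\alpha=1$; combined with the automatic $\beta=1$ this is exactly the full smooth CP equations for $H_0$, and $x=1$ supplies the coordinate-of-$1$ hypothesis. The $\mathcal{V}_{0,2}$ case is symmetric. In $\mathcal{V}_{0,1,2}$ the system yields $(a_0,a_1,a_2)=(1,1-\alpha,1-\beta)$, and a direct evaluation at $(x,y)=(1,1)$ produces $\alpha-1=(b-a^2)/(a\,Q(1,1))$ and $\beta-1=(a-b^2)/(ab\,Q(1,1))$; the full smooth CP equations force $\alpha=\beta=1$, hence $b=a^2$ and $a=b^2$, i.e.\ $a=b=1$, in which case $a_1=a_2=0$ and $\mathbf{1}$ lands on the boundary, while the open regime $1<\sqrt{b}<a<b^2$ gives strict $\alpha,\beta<1$ and hence interior. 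The same inspection handles the exceptional boundaries $a=1,\,b<1$ inside $\mathcal{V}_{0,1}$ and $b=1,\,a<1$ inside $\mathcal{V}_{0,2}$, where evaluating $\alpha$ or $\beta$ at the CP 4 formula $(a,b,1/(3ab))$ yields exactly $1$ and confirms the boundary conclusion of the ``If'' clause.

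The main obstacle will be bookkeeping rather than deep computation: matching $\mathbf{w}$ to its stratum and tracking which of the smooth CP equations for $H_0$ survive after freezing coordinates. Once the two structural observations --- that $\mathbf{v}_1,\mathbf{v}_2$ are standard basis vectors, and that the stratum CP equations pin the corresponding entry of $\mathbf{v}_0$ to $1$ --- are in place, the lemma reduces to reading off signs from a triangular linear system.
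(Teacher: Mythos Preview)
Your proposal is correct and follows essentially the same approach as the paper's proof sketch: both compute the generators of $N(\mathbf{w})$ explicitly, observe that $\mathbf{v}_1,\mathbf{v}_2$ are standard basis vectors while $\mathbf{v}_0$ has $t$-entry equal to $1$, and reduce membership of $\mathbf{1}$ in the cone to a condition on the remaining entries of $\mathbf{v}_0$ that is then identified with the smooth critical-point equations for $H_0$. Your triangular-system bookkeeping (reading off $a_1=1-\alpha$, $a_2=1-\beta$) is simply a clean repackaging of the paper's inequality test $-a/x+bx/(ay)\lessgtr 0$ and its $y$-analogue.
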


\begin{proof}[Proof (sketch)]
For $H_T$ factored as in \cref{eq:GF-T}, we can compute $\nabla_{\log}H_i(\mathbf{w})$ for $i = 0, 1, 2$ explicitly.  When $i = 1$ or $2$, the logarithmic gradient is a basis vector.  For $i = 0$,
\begin{align*}
    \nabla_{\log}H_0(\mathbf{w}) =\left(-1-xyt\left(-\frac{a}{x}+\frac{bx}{ay}\right), -1-xyt\left(-\frac{bx}{ya}+\frac{y}{b}\right),-1
    \right).
\end{align*}
In cases where $H_1(\mathbf{w}) = 0$, $\bone$ is in the interior cone if and only if $-a/x + bx/ay < 0$, and it is on the boundary if  $-a/x + bx/ay = 0$.  A similar statement can be made for $H_2$.  It is then a matter of algebra to show that equality occurs exactly when the critical point equations for $H_0 = 1 - txyS(1/x, 1/y)$ are met. \end{proof}

From \cref{lem:ContribCP}, we find that the critical points from \cref{table:TandemCrit} always contribute except perhaps when $a = 1$ or $b = 1$.  In these exceptional cases, we note that \cite[Theorem 10.65]{MePeWi:2024} indicates that when the numerator of a GF is nonzero at the critical point, the direction being on a facet of $N(\mathbf{w})$ cuts the asymptotic contribution in half.  Here, the numerator is zero, but we conjecture the idea is still true regardless.

\begin{conjecture} \label{conj:TransverseBoundary}
    When a direction $\mathbf{r}$ is on a facet of the normal cone $N(\mathbf{w})$ defined by a minimal transverse critical point $\mathbf{w}$, then $\mathbf{w}$ contributes half as much to the asymptotics as when $\mathbf{r}$ is in the interior.
\end{conjecture}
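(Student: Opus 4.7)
The approach I would take is to start from the Cauchy integral representation of the coefficient $[\mathbf{z}^\mathbf{n}]F(\mathbf{z})$ given in \cref{eq:Cauchy} and localize near the minimal contributing point $\mathbf{w}$. Because $\mathbf{w}$ is a transverse multiple point, one can locally change coordinates so that each factor $H_{k_j}$ vanishing at $\mathbf{w}$ aligns with a distinct coordinate hyperplane. The Cauchy integral then factors into iterated residues over the factors one wishes to integrate out, followed by a residual Fourier-Laplace integral over the remaining variables.

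When the direction $\mathbf{r}$ is in the interior of the normal cone $N(\mathbf{w})$, all coefficients $a_j$ in the representation $\mathbf{r} = \sum a_j \mathbf{v}_{k_j}$ are strictly positive, each factor $H_{k_j}$ is collapsed by a genuine residue, and the remaining Fourier-Laplace integral has its saddle in the interior of the integration domain, producing a full Gaussian contribution. When $\mathbf{r}$ lies on a facet, some $a_j = 0$ and the corresponding factor cannot be collapsed by a residue; instead its vanishing locus survives as a boundary of the effective integration domain, so the saddle point is pushed to this boundary. A half-Gaussian estimate then yields exactly half the interior contribution, in direct analogy with the smooth case treated in \cite[Theorem 10.65]{MePeWi:2024}. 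This outline would suffice whenever $G(\mathbf{w}) \neq 0$.

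The principal obstacle, and the reason for stating only a conjecture, is that in the exceptional regimes relevant here, $G$ vanishes at $\mathbf{w}$: a direct check shows that at least one factor of the numerators in \cref{eq:GF-T} and \cref{eq:GF-DT} vanishes at each boundary critical point from \cref{table:TandemCrit} whenever $a = 1$ or $b = 1$. Consequently the leading half-Gaussian term vanishes identically, and one must Taylor expand $G$ to higher order in the direction transverse to the facet and match it against a correspondingly higher-order boundary integral. Proving the conjecture in full generality requires showing that the factor of $1/2$ persists regardless of the order of vanishing of $G$ and of the codimension of the facet of $N(\mathbf{w})$ that contains $\mathbf{r}$. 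This uniform statement does not follow directly from existing ACSV machinery and is what makes the conjecture genuinely open; a viable path would be to extend the stationary phase analysis underlying \cite[Theorem 10.65]{MePeWi:2024} by iterating its argument through each additional order of vanishing, then verifying that the boundary-versus-interior ratio remains unchanged at every stage.
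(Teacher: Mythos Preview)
Your proposal is a reasonable heuristic outline, but you should note that the paper itself does not prove this statement: it is explicitly labelled a \emph{conjecture} and is left open. The paper's supporting evidence consists of (i) the observation that \cite[Theorem~10.65]{MePeWi:2024} gives the halving phenomenon when the numerator is nonzero at the critical point, and (ii) numerical checks for the Tandem and Double Tandem models at specific weights (e.g.\ $a=1/8,\ b=1$ and $a=1,\ b=1/4$ for walks of length $2000$), together with the remark that the conjectured subexponential growth matches Figure~7 of \cite{CoMeMiRa:2017}. There is no analytic argument beyond this.

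Your sketch is consistent with, and somewhat more detailed than, the paper's own motivation: both of you invoke \cite[Theorem~10.65]{MePeWi:2024} for the $G(\mathbf{w})\neq 0$ case, and you correctly isolate the genuine obstruction, namely that $G$ vanishes at the relevant critical points when $a=1$ or $b=1$, so that the leading half-Gaussian term is zero and one must control higher-order terms in the boundary stationary-phase expansion. The paper makes exactly the same point (``Here, the numerator is zero, but we conjecture the idea is still true regardless''). Your proposed path---iterating the boundary stationary-phase argument through successive orders of vanishing and checking that the $1/2$ ratio persists---is plausible but is not carried out, and the paper offers no such argument either. In short, neither you nor the paper has a proof; your write-up accurately diagnoses why the statement remains conjectural.
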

As with all of the other regimes for the Tandem and Double Tandem model, we have verified this conjecture numerically when $a = 1$ or $b = 1$, and our conjectured subexponential growth aligns with Figure 7 of \cite{CoMeMiRa:2017}.  In particular, we looked at the weights $a = 1/8$ and $b = 1$ and found that for walks of length $2000$, the error between the asymptotic estimate and the exact number of walks is less than $1\%$.  For weights $a = 1$ and $b = 1/4$ and walks of length $2000$, the error was approximately $1.2\%$.

Note that this situation does not occur in the analysis of the Gouyou-Beauchamps walks in \cite{CoMeMiRa:2017}.  This is because in the transitional cases for the Gouyou-Beauchamps walks, the corresponding generating function has a factor of $1 - y$ in the numerator and denominator that cancels and makes these cases among the easier cases to analyze.  This is notable in particular because the factor of $1 - y$ in the numerator is independent of the weights in this regime.  Although we too find cancellation of factors in the numerator and denominator for some regimes (see \cref{sec:Axial} below), there is no cancellation in the transitional cases for the Tandem or Double Tandem models, and indeed there is no factor in the numerator that is independent of the weights.

\subsection{Evaluating the Cauchy integral}

The final step is to set up the integral to compute asymptotics. Note that the textbook \cite{MePeWi:2024} includes results for transverse critical points that could be applied directly at this point, but for a more complete and elementary viewpoint, we include a residue approach.  Beginning with the Cauchy integral equation (\cref{eq:Cauchy}), we expand the torus $T$ until it nears the minimal critical points in \cref{table:TandemCrit}.  When different minimal points from \cref{table:TandemCrit} end up being equal at certain weight values, the analysis differs in these cases because it causes cancellation between factors of $G$ and $H$.  Ultimately, we are left with the 9 cases as described in \cref{fig:TandemAsym}.  We outline here an overview of the process of extracting asymptotics.  The details for each of the 9 cases can be found in \cite{Simon:2023}, with an example in \cref{sec:Axial} below.  We also include {\tt SageMath} code at the following URL illustrating how to compute asymptotics in each of these cases.
\begin{center}
    \url{https://cocalc.com/TorinGreenwood/AofA-A2Walks/A2TandemDoubleTandem}
\end{center}

The overall goal is to simplify the integral until it is a Fourier-Laplace type integral where the following result applies:
\begin{theorem}[Theorem 7.7.3 of \cite{Hormander:2015}; Lemma 5.16 of \cite{MePeWi:2024}]\label{thm:integral} 
Suppose that the functions $A(\btheta)$ and $\phi(\btheta)$ in $r$ variables are smooth in a neighborhood $\mathcal{N}$ of the origin and that the gradient $\nabla \phi(\bzer)=\bzer$; the Hessian $\mathcal{H}$ of $\phi$ at $\bf{0}$ is non-singular; $\phi(\bf{0})=0$; and the real part of $\phi(\btheta)$ is non-negative on $\mathcal{N}$.  Then for each $M>0$ there are complex constants $C_0, \dots, C_M$ such that 
\begin{equation} \label{eq:IntApprox}
\int_{\mathcal{N}} A (\btheta) e^{-n \phi(\btheta)} d \btheta  = \left( \frac{ 2 \pi }{n} \right)^{r/2} \det{\left( \mathcal{H} \right)}^{-1/2} \cdot \sum_{j=0}^M C_j n^{-j} + O(n^{-M-1}).
\end{equation}
The constants $C_j$ are given by the formula: 
\begin{equation}\label{eq:Ck}
C_j =  (-1)^j  \sum_{\ell \le 2j} \frac{ \mathcal{D}^{\ell+j} ( A \underline{\phi}^\ell)\bf(0)}{2^{\ell+j} \ell! (\ell+j)!},
\quad\text{ with }
\quad\underline{\phi} := \phi - \langle \btheta, \mathcal{H} \btheta \rangle
\end{equation}
where $\mathcal{D}$ is the  differential operator
$
\mathcal{D} := \sum_{u,v} (\mathcal{H}^{-1})_{u,v} \, \frac{\partial}{\partial\theta_u} \frac{\partial}{\partial\theta_v}.
$
\end{theorem}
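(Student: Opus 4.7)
The plan is to prove the asymptotic expansion by the classical saddle-point / Laplace recipe: localize the integral near the critical point, rescale by $\sqrt{n}$, and expand the non-Gaussian part of the integrand as a formal series in $n^{-1/2}$ whose terms are Gaussian moments that admit closed-form evaluation.

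First, I would localize the integral to a small ball $B_\epsilon$ around the origin at the cost of exponentially small error. Because $\phi(\mathbf{0})=0$, $\nabla\phi(\mathbf{0})=\mathbf{0}$, and the Hessian $\mathcal{H}$ is non-singular, Taylor's theorem gives $\text{Re}\,\phi(\btheta) \geq c|\btheta|^2$ for $|\btheta|$ small; combined with the hypothesis $\text{Re}\,\phi \geq 0$ on $\mathcal{N}$ and a compactness / partition-of-unity argument, one obtains a uniform bound $\text{Re}\,\phi \geq c'>0$ on $\mathcal{N}\setminus B_\epsilon$. The integral over this outer region is then $O(e^{-c'n})$, which is absorbed into the claimed $O(n^{-M-1})$ error.

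Next I would substitute $\btheta = \btau/\sqrt{n}$ on $B_\epsilon$, picking up a Jacobian $n^{-r/2}$. Writing $\phi = Q + \underline{\phi}$ with $Q(\btheta)=\langle\btheta,\mathcal{H}\btheta\rangle$, the rescaling gives $e^{-n\phi(\btau/\sqrt{n})} = e^{-Q(\btau)}\,e^{-n\underline{\phi}(\btau/\sqrt{n})}$, and since $\underline{\phi}(\btheta) = O(|\btheta|^3)$ we have $n\underline{\phi}(\btau/\sqrt{n}) = O(|\btau|^3/\sqrt{n})$. I would then Taylor-expand both $A(\btau/\sqrt{n})$ and $e^{-n\underline{\phi}(\btau/\sqrt{n})}$ as formal power series in $n^{-1/2}$, extend the integration domain back to $\mathbb{R}^r$ (at the cost of one more exponentially small tail, thanks to Gaussian decay of $e^{-Q(\btau)}$), and evaluate each resulting Gaussian moment using Wick's theorem. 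Each pairwise contraction inserts an entry of $\mathcal{H}^{-1}$, which is exactly the action of the differential operator $\mathcal{D}$ appearing in \eqref{eq:Ck}; parity of Gaussian moments kills the half-integer powers of $n$, leaving the series $\sum_j C_j n^{-j}$ with the prefactor $(2\pi/n)^{r/2}\det(\mathcal{H})^{-1/2}$.

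The remaining work is quantitative error control and combinatorial bookkeeping. Truncating the formal expansion at order $M$, one uses integral-form Taylor remainders for $A$ and $\underline{\phi}$ together with uniform polynomial moment bounds against $e^{-Q(\btau)}$ to bound the truncation error by $O(n^{-M-1})$. The main obstacle I anticipate is the combinatorial step: matching the sum of Wick contractions arising from the expansion of $A(\btau/\sqrt{n})\,e^{-n\underline{\phi}(\btau/\sqrt{n})}$ with the precise coefficients $(-1)^j / \bigl(2^{\ell+j}\ell!(\ell+j)!\bigr)$ and verifying the formula for $C_j$ given in \eqref{eq:Ck}. This reindexing — grouping terms by the total power of $n^{-1}$ they contribute rather than by the powers of $\btau$ they contain — is the technical heart of Hörmander's argument, and is where the bookkeeping, rather than any conceptual step, becomes delicate.
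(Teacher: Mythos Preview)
The paper does not give its own proof of this statement: \cref{thm:integral} is quoted from H\"ormander and from \cite{MePeWi:2024} and used as a black-box tool in the subsequent analysis. There is therefore nothing in the paper to compare your argument against; your outline is essentially the classical stationary-phase argument that H\"ormander himself gives in the cited reference.

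One gap worth flagging: your localization step claims $\operatorname{Re}\phi \ge c'>0$ on $\mathcal{N}\setminus B_\epsilon$ by a ``compactness / partition-of-unity'' argument, but the hypotheses only give $\operatorname{Re}\phi \ge 0$ there, and non-degeneracy of the complex Hessian $\mathcal{H}$ does not force $\operatorname{Re}\phi$ to have a strict minimum at the origin (e.g.\ $\phi(\theta)=i\theta^2$ in one variable has $\mathcal{H}=2i$ non-singular but $\operatorname{Re}\phi\equiv 0$). H\"ormander's actual proof handles the complement of $B_\epsilon$ by inserting a smooth cutoff and integrating by parts repeatedly against the non-vanishing gradient $\nabla\phi$, which produces $O(n^{-\infty})$ decay even in the purely oscillatory case. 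With that correction in place, your rescale--expand--Wick outline is the standard route, and the combinatorial reindexing you identify as the delicate step is indeed where the bookkeeping lies.
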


The computational obstacle in using \cref{thm:integral} is determining the first $j$ for which $C_j$ is nonzero, as this gives the subexponential growth. If $G$ vanishes to order $k$ at the critical point, then $C_j = 0$ for $0 \leq j \leq \lceil k/2 \rceil - 1$.
Whenever the critical point is not smooth, we first take residues to reduce the number of variables in the integral and also make the singular variety smooth.  Because the non-smoothness comes from factors of the form $(1 - x)$ or $(1 - y)$, it is typically straightforward to compute residues.

For example, when a critical point has $x$ coordinate equal to $1$, we can compare the value of the integral over the circle $\vert x \vert = 1 - \epsilon$ to the integral at $\vert x \vert=1+ \epsilon$ and add a term which has smaller exponential growth, so it does not contribute to the dominant asymptotics. Then we compute the difference of the two integrals using the residue theorem, which corresponds to removing the factor of $(1-x)$ in the denominator and evaluating the remaining function at $x=1$. After applying the residue, we check to see if factors between $G$ and $H$ now cancel, which can impact the order to which $G$ vanishes. Then, we do a change of variables to set the integral to use \cref{thm:integral}. Lastly, we compute the $C_j$ to obtain the asymptotics, which is completed using code.  We incorporate portions of the code available in the online supplement to the textbook, \cite{Melczer:2021}.

\subsection{Example analysis: axial regime} \label{sec:Axial}

Here, we compute the asymptotics in the case where $a = b^2 > 1$.  Equivalently, by expanding the generating function in \cref{eq:GF-T} as a geometric series in $t$, we aim for an asymptotic expression for the following:
\begin{align*}
q_{(a,b)}(n) :=[x^0][y^0]\left( \frac{a(x - y^2)(a^{1/2}x^2 - a^2y)(a^{3/2} - xy)}{a^{9/2}(x - 1)x(y - 1)y} \left(\frac{a}{x}+\frac{x}{a^{1/2}y}+ \frac{y}{a^{1/2}}   \right)^n \right).
 \end{align*}

The critical point that is contributing is $\left(1,  \frac{b}{\sqrt{a}} \right) = (1,1)$.
However, we calculate that the direction $(1,1)$ is not in the normal cone at this point, and is instead on the boundary.
To get around this, we  take the term $(x-y^2)$ in the numerator and express it at $(x-1)-(y^2-1)$. Since coefficient extraction is linear, we have the following
 \begin{align*}
q_{(a,b)}(n) &=[x^0][y^0]\left( \frac{a(a^{1/2}x^2 - a^2y)(a^{3/2} - xy)}{a^{9/2}x(y - 1)y} \left(\frac{a}{x}+\frac{x}{a^{1/2}y}+ \frac{y}{a^{1/2}}   \right)^n \right)
 \\
 &+
[x^0][y^0]\left( \frac{a(y+1)(a^{1/2}x^2 - a^2y)(a^{3/2} - xy)}{a^{9/2}(x - 1)xy} \left(\frac{a}{x}+\frac{x}{a^{1/2}y}+ \frac{y}{a^{1/2}}   \right)^n \right).
 \end{align*}
 
 The first function has critical point at $(a,1)$. The second function has critical point $(1,1)$. Thus, the first function does not contribute to the asymptotics.  The cancellation of factors here is similar to \cite{CoMeMiRa:2017}.
 
 In order to obtain asymptotics from the second function, we start by taking a residue at $x=1$.
 The next step is to do a change of variables to make it of Fourier-Laplace type so we can use \cref{thm:integral}. 
We apply the change of variables $y = e^{i \theta}, dy = i e^{i \theta} d \theta$, so the region of integration is over $[-\pi/2, 3\pi/2)$. 
With this transformation the integral becomes 
\[
\int\displaylimits_{[-\pi/2, 3\pi/2)} A(\theta) e^{-n \phi(\theta)} d \theta,
\]
where
\[
A(\theta) = \frac{(a^{2}e^{i \theta}-a^{1/2})(a^{3/2}-e^{i \theta})(e^{i \theta}+1)e^{-i \theta}}{a^{7/2}}
\]
and
\[\phi(\theta) = \log{\left( \frac{a+\frac{2}{\sqrt{a}}}{a^{-1/2}e^{2 i \theta}+ae^{i \theta}+a^{-1/2}} \right)}.\]
Applying \cref{thm:integral} gives the formula
\begin{align*}
q_{(a,b)}(n) \sim (a+2a^{-1/2})^n \cdot n^{-1/2} \cdot
\frac{(a^3 - 2a^{3/2} + 1)\sqrt{a^{3/2} + 2}}{\sqrt{\pi}a^3}.
\end{align*}
For the Double Tandem stepset we compute
\begin{multline*}
q_{(a,b)}(n) \sim \left(\frac{a^2+2(a+1)\sqrt{a}+1}{a} \right)^n \cdot n^{-1/2}\\
\cdot \frac{(a^{7/2} - 2a^{2} + \sqrt{a})\sqrt{2a^2+(a^2+1)\sqrt{a}+2a}}{\sqrt{\pi}a^4\sqrt{a+1}}.
\end{multline*}

\section{Next steps}

The results here merely scratch the surface of possible questions about walks within Weyl chambers.  An obvious next step would be to analyze the $d$-dimensional Tandem and Double Tandem stepsets.  For example, the $d$-dimensional Tandem stepset has steps given by
\[
\mS_{T_d}= \{ e_i-e_{i-1}: 2 \le i \le d\} \cup \{e_1\} \cup \{-e_d\}\]
where $e_i$ is the $i$th elementary basis vector with a one in the $i$th coordinate and zeroes elsewhere.  The first steps in computing the asymptotics are not the main obstructions. We can express the generating function for these walks as the diagonal of a rational function, and solve the critical point equations in $d$ dimensions.  We additionally find a similar structure to the contributing critical points as in the $2$-dimensional case.  However, there are more cases where \cref{conj:TransverseBoundary} may apply and computing constants becomes increasingly difficult.

These difficulties appear largely because applying \cref{thm:integral} 
involves solving for the first nonzero $C_j$ in \cref{eq:IntApprox}.  This is in contrast to existing results for $A_1^d$, where the functional form of the group sum in the $A_1^d$ case allowed the authors in \cite{MiSi:2020} work through the calculations in general. In particular, the first nonzero $C_j$ was always the first term where there are nonzero derivatives of order $2j$. For $A_d$ it is straightforward to determine the degree to which the function vanishes at a critical point, but this is not sufficient. For $A_2$ when $a<1$ and $b<1$, the function vanishes to degree three but the constant $C_2$ is still zero at the critical point. It is possible that there are aspects of the governing function, coming from the Weyl denominator, that must be exploited in order to give a general statement. Even for $A_3$, computations can include taking 90 different mixed partial derivatives of order 24. Certainly, there are simplifications that can be made to obtain this, but it presents a barrier to quickly getting results in higher dimensions to find a pattern. 

While current work has focused on the Weyl chambers of $A_1^d$ and $A_d$, there are other families of interest. In particular, there may still be room to use the approach here to derive explicit asymptotic results for weighted reflectable walks for the family of Weyl groups $B_d$ for $d > 2$. In \cite{Feierl:2014} Feierl counted weighted walks in $B_d$ using determinants, while the case of weighted reflectable walks in $B_2$ has been covered in \cite{CoMeMiRa:2017}.

More generally, one goal is to have results for walks in the product of any Weyl chambers. This would be the culmination of multiple projects, as there are not general results for all Weyl chambers. This is a plausible project as the product of the chambers should decompose in the same sense as the reflectable walks.

\printbibliography 
\end{document}